\RequirePackage[l2tabu, orthodox]{nag}
\documentclass[a4paper,11pt]{article}
\usepackage[margin=4.1cm]{geometry}
\usepackage{mathrsfs}
\usepackage{lmodern}
\usepackage{mparhack}
\usepackage{amsmath, amssymb, amsfonts, amsthm, mathtools}
\usepackage{fixmath}
\mathtoolsset{centercolon}
\usepackage[latin1]{inputenc}
\usepackage[T1]{fontenc}
\usepackage{bookmark}

\usepackage{enumitem}
\newlist{casesp}{enumerate}{4} 
\setlist[casesp]{align=left, 
                 listparindent=\parindent, 
                 parsep=\parskip, 
                 font=\normalfont\bfseries, 
                 leftmargin=0pt, 
                 labelwidth=0pt, 
                 itemindent=.4em,labelsep=.4em, 
                 partopsep=0pt, 
                 itemsep=0pt, 
                 }
\setlist[casesp,1]{label=Case~\arabic*:,ref=\arabic*}
\setlist[casesp,2]{label=Case~\thecasespi.\arabic*:,ref=\thecasespi.\arabic*}
\setlist[casesp,3]{label=Case~\thecasespii.\arabic*:,ref=\thecasespii.\arabic*}
\setlist[casesp,4]{label=Case~\thecasespiii.\arabic*:,ref=\thecasespiii.\arabic*
}

\theoremstyle{plain}
\newtheorem{theorem}{Theorem}
\newtheorem{lemma}[theorem]{Lemma}
\newtheorem{proposition}[theorem]{Proposition}

\theoremstyle{definition}
\newtheorem{example}[theorem]{Example}

\newcommand{\setbuilder}[2]{\left\{#1\;:\;#2\right\}}
\newcommand{\myangle}{\sphericalangle}
\newcommand{\ark}[2]{\myangle(#1,#2)}
\newcommand{\numbersystem}[1]{\mathbb{#1}}
\newcommand{\bR}{\numbersystem{R}}
\newcommand{\bZ}{\numbersystem{Z}}
\DeclareMathOperator{\conv}{conv}
\DeclareMathOperator{\supp}{supp}
\DeclareMathOperator{\bd}{bd}
\renewcommand{\Re}{\bR}

\newcommand{\norm}[2][]{\left\lVert#2\right\rVert_{#1}}

\author{M\'arton Nasz\'odi\footnote{Alfr\'ed R\'enyi Inst. of Math. and Dept. of Geometry, E\"otv\"os Lor\'and University, Budapest}
\and Vilmos Prokaj\footnote{Department of Probability Theory and Statistics, E\"otv\"os Lor\'and University, Budapest}
\and Konrad Swanepoel\footnote{Department of Mathematics, London School of Economics}}
\title{Angular measures and Birkhoff orthogonality in Minkowski planes\footnote{Part of the research was carried out while MN was a member of 
J\'anos Pach's chair of DCG at EPFL, Lausanne, which was supported by Swiss 
National Science Foundation Grants 200020-162884 and 200021-175977, and while MN and KS visited the Mathematical Research Institute Oberwolfach in their Research in Pairs programme.
MN also acknowledges the support of the National Research, Development and 
Innovation Fund grant K119670.}}\date{}


\begin{document}
\maketitle
\begin{abstract}
Let $x$ and $y$ be two unit vectors in a normed plane $\Re^2$. We say that $x$ 
is \emph{Birkhoff orthogonal} to $y$ if the line through $x$ in the direction 
$y$ supports the unit disc. A \emph{B-measure} (Fankh\"anel 2011) is an angular 
measure $\mu$ on the 
unit circle for which $\mu(C)=\pi/2$ whenever $C$ is a shorter arc of the unit 
circle connecting two Birkhoff orthogonal points. We present a characterization 
of the normed planes that admit a B-measure.
\end{abstract}

\section{Introduction}

Let $K$ be an $o$-symmetric convex body on the plane, and consider 
the normed plane $(\Re^2,\norm[K]{\cdot})$, where 
$\norm[K]{x}=\min\setbuilder{\lambda>0}{x\in \lambda K}$ for any $x\in\Re^2$.

Let $x$ and $y$ be two non-zero vectors in $\Re^2$. We say that $x$ is 
\emph{Birkhoff orthogonal} to $y$, and denote it by $x\dashv y$, if 
$\norm[K]{x}\leq\norm[K]{x+ty}$ for all $t\in\Re$. Geometrically, it means that 
the line through $x$ in the direction $y$ supports $\norm[K]{x}K$. In 
general, Birkhoff orthogonality is not a symmetric relation.
Normed planes where Birkhoff orthogonality is symmetric are called \emph{Radon 
planes} and the boundaries of their unit balls \emph{Radon curves}
(see the survey \cite{MS06}).

A Borel measure $\mu$ on $\bd K$ is called an \emph{angular measure}, if 
$\mu(\bd K)=2\pi$, $\mu(X)=\mu(-X)$ for every Borel subset $X$ of $\bd K$, and 
$\mu$ is \emph{continuous}, that is, $\mu(\{x\})=0$ for every $x\in\bd K$. 
Clearly, for any $K$, there is an angular measure, as the one-dimensional 
Lebesgue measure on $\bd K$ normalized to $2\pi$ is one, but has no relation to 
the geometry of $(\Re^2,\norm[K]{\cdot})$.
A natural problem then is to find angular measures with interesting geometric 
properties.
For instance, Brass \cite{Br96} showed that whenever the unit ball is not a 
parallelogram, there is an angular measure in which the angles of any 
equilateral triangle are equal.
This type of angular measure is very useful in studying packings of unit balls 
\cite{Br96, S18}.
Angular measure with other properties have been proposed; see the survey 
\cite[Section~4]{BHMT17} for an overview.
An angular measure $\mu$ is called a \emph{B-measure} \cite{Fa11} if 
$\mu(C)=\pi/2$ for 
every closed arc $C$ of $\bd K$ that contains no opposite points of $\bd K$, 
and whose end points $x$ and $y$ satisfy $x\dashv y$.

The main result, Theorem~\ref{thm:bmeasureexists}, of this note is a 
characterization of the normed planes $(\Re^2,\norm[K]{\cdot})$ which admit a 
B-measure.

We call a point $x$ in $\bd K$ an \emph{Auerbach point}, if there is a $y\in\bd 
K$ such that $x\dashv y$ and $y\dashv x$. 
It is well known that Auerbach points exist for any norm \cite[Section 
3.2]{T96}.
We denote the set of Auerbach points of $K$ by $A(K)$.
Note that $A(K)$ is a closed subset of $\bd K$.
We denote the union of 
open non-degenerate line segments contained in $\bd K$ by $E(K)$.

\begin{theorem}\label{thm:bmeasureexists}
 Let $K$ be an origin-symmetric convex body in $\Re^2$. Then there is a 
B-measure on $\bd K$ if, and only if, the set $A(K)\setminus E(K)$ 
is uncountable.
\end{theorem}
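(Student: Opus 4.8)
The plan is to translate the B-measure condition into a statement about a circle map. Orient $\bd K$ counterclockwise and let $a(x)=-x$ denote the antipodal involution. For $x\in\bd K$ let $Tx$ be the Birkhoff-orthogonal partner obtained by taking the counterclockwise supporting direction at $x$ (normalized to lie on $\bd K$), so that $x\dashv Tx$; this $T$ is a genuine homeomorphism when $K$ is smooth and strictly convex, is multivalued (an arc) at a corner of $K$, and collapses each maximal open edge to a single point. Writing $G(x)=\mu\bigl(\text{counterclockwise arc from a basepoint to }x\bigr)$ for the cumulative function of an angular measure $\mu$, one sees that $\mu$ is a B-measure precisely when $G$ is continuous, monotone, of degree one, and satisfies $G(Tx)=G(x)+\pi/2$ together with the symmetry $G(ax)=G(x)+\pi$. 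In other words, a B-measure is exactly a monotone, antipodally equivariant semiconjugacy from $T$ to the rotation by $\pi/2$; note that a corner of $K$ forces $G$ to be constant on the corresponding orthogonal arc, while continuity of $G$ encodes $\mu(\{x\})=0$. The rotation number of $T$ is $1/4$, which matches the target rotation.

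For necessity, assume a B-measure $\mu$ exists. Since $\mu$ is nonatomic with total mass $2\pi$, its support $\supp\mu$ is uncountable. First I would show $\mu$ vanishes on $E(K)$: on a maximal open edge with direction $v$ the map $T$ is constant equal to $v$, so $\mu([x,v])=\pi/2$ for every $x$ in the edge, forcing every subarc of the edge to have measure $0$; as there are only countably many edges, $\mu(E(K))=0$ and hence $\supp\mu\cap E(K)=\emptyset$. Next, from $\mu([x,Tx])=\mu([Tx,T^2x])=\pi/2$ the counterclockwise arc from $x$ to $T^2x$ has measure $\pi$, and by symmetry so does the arc from $x$ to $-x$; applying the measure-preserving map $a$ shows that the arc from $x$ to $U(x)$ is $\mu$-null, where $U:=a\circ T^2$ is the map whose fixed-point set is exactly $A(K)$. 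If some $x\in\supp\mu\setminus(A(K)\cup E(K))$ existed, then $x$ would lie in a complementary arc $(p,q)$ of $A(K)$ on which $U$ has no fixed point; iterating the null arcs $[U^{n}x,U^{n+1}x]$ covers $(x,q)$, and the backward iterates cover $(p,x)$, so the whole neighborhood $(p,q)$ of $x$ would be null, contradicting $x\in\supp\mu$. Therefore $\supp\mu\subseteq A(K)\setminus E(K)$, and the latter set is uncountable.

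For sufficiency, suppose $A'\coloneqq A(K)\setminus E(K)$ is uncountable; it is closed, antipodally symmetric, and carries the period-four ``quarter-turn'' structure $x\mapsto Tx\mapsto -x\mapsto -Tx$ inherited from the Auerbach relation. I would use this structure to build the desired semiconjugacy $G$ directly: define $G$ on $A'$ so that it increases monotonically around the circle with $G(Tx)=G(x)+\pi/2$ and $G(ax)=G(x)+\pi$, using the uncountability of $A'$ to guarantee that $G$ can sweep out all of $S^1$ while being locally constant only on the countably many gaps. Extending $G$ to be constant on each complementary arc of $A'$ yields a continuous, monotone, degree-one, antipodally equivariant map, and $\mu\coloneqq$ the pushforward under $G$ of normalized Lebesgue measure is then an angular measure. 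Finally I would verify $\mu([x,Tx])=\pi/2$ for every $x$, the only nonroutine cases being the corners of $K$, where one checks that the orthogonal arc is a gap of $A'$ so that $G$ is constant on it, exactly as required.

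The main obstacle is the systematic handling of non-smooth and non-strictly-convex $K$: corners turn $T$ into a multivalued relation and edges collapse it, so the clean dynamical picture (rotation number $1/4$, the map $U=a\circ T^2$ with $\mathrm{Fix}(U)=A(K)$, and the Poincar\'e-type semiconjugacy) must be set up for a relation rather than a homeomorphism, and the rational rotation number $1/4$ makes the existence of the semiconjugacy genuinely contingent on the \emph{uncountability} of the periodic set $A'$ rather than automatic. Controlling the interplay between corners (which create plateaus of $\mu$) and edges (which must carry no mass) is where the condition $A(K)\setminus E(K)$, rather than merely $A(K)$, becomes essential, and verifying the $\pi/2$ normalization at corners and at non-Auerbach points is the delicate part of the construction.
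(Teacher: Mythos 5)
Your dynamical reformulation (a B-measure is a monotone, antipodally equivariant semiconjugacy from the Birkhoff map $T$, of rotation number $1/4$, to the rotation by $\pi/2$) is correct, and your necessity argument is essentially sound: it is an iterated rephrasing of the paper's Lemma~\ref{lem:suppina}, which reaches $\supp(\mu)\subseteq A(K)\setminus E(K)$ without any iteration --- for $x\notin A(K)$ it produces in two steps (choose $y_1,y_2$ with $x\dashv y_1$, $y_2\dashv x$, then $x_1,x_2$ with $y_1\dashv x_1$, $x_2\dashv y_2$) two null arcs $\ark{x}{x_1}$ and $\ark{x}{x_2}$ on opposite sides of $x$, so a whole neighbourhood of $x$ is already null. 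Your scheme with $U=a\circ T^2$ works as stated when $T$ is a homeomorphism; you correctly flag that corners and edges make $T$ a relation, and this is patchable (for instance, run the forward iteration from every $x'\in(p,x)$ rather than inverting $U$). So this half is correct, if less economical than the paper's local argument.

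The sufficiency half, however, has a genuine gap: the sentence ``define $G$ on $A'$ so that it increases monotonically around the circle with $G(Tx)=G(x)+\pi/2$ \dots\ using the uncountability of $A'$'' is precisely the statement to be proved, not a construction. Since the equivariance determines $G$ everywhere from its restriction to a single quarter $\ark{a}{\phi(a)}$ (where $\phi(x)$ denotes the first mutual partner of $x$ in the positive direction), you must (i) show that for a suitable Auerbach pair the quarter-arc itself meets $A(K)\setminus E(K)$ in an uncountable set --- not immediate from global uncountability once segments are present, because $\phi$ is then neither injective nor surjective on $A(K)\setminus E(K)$; the paper proves this by comparing $b=\phi(a)$ with $a'=\phi(b^+)$ and using that coincident $\phi$-values force segments on $\bd K$; (ii) extract a perfect subset and invoke the existence of a continuous measure with prescribed perfect support (the paper's Proposition~\ref{thm:continuoumeasure}), which your sketch never supplies; and (iii) verify $\mu(C)=\pi/2$ for \emph{every} Birkhoff-orthogonal pair, where your assessment that ``the only nonroutine cases are the corners'' misjudges the difficulty: extending $G$ constantly across gaps of $A(K)\setminus E(K)$ does not by itself give $G(Tx)=G(x)+\pi/2$ for $x$ in a gap, nor for the partners $y\neq\phi(x)$ of an Auerbach point $x$. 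This verification is the longest part of the paper's proof, a case analysis driven by finiteness lemmas of the form: if $a\dashv y$ and $a\dashv b$ then $a\dashv y'$ for every $y'\in\ark{y}{b}$, so an Auerbach point strictly inside that arc forces a segment on $\bd K$; these show that $\phi^{-1}$ of the relevant arcs agrees with the expected quarter-arcs up to countably many points, hence up to $\nu$-null sets. Without (i)--(iii), your outline establishes the theorem only in the smooth, strictly convex case and in spirit, not in general.
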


This result strengthens Fankh\"{a}nel's \cite[Theorem~1]{Fa11}, where the 
existence of a B-measure is shown under the condition that $A(K)\setminus E(K)$ 
contains an 
arc.
(Fankh\"anel does not explicitly exclude line segments, but it is clear that 
they have to be excluded, as line segments in $A(K)$ necessarily have measure 
$0$ for any B-measure; see Lemma~\ref{lem:suppina}.)
We prove Theorem~\ref{thm:bmeasureexists} in Section~\ref{sec:bmeasureexists}, 
where we also present a smooth, strictly convex, centrally symmetric planar 
body $K$ such 
that $A(K)$ is the union of two disjoint copies of the Cantor set and a 
countable set of isolated points (Example~\ref{ex:auerbachcantor}).
Thus, $A(K)$ is of Lebesgue measure zero and 
yet, by Theorem~\ref{thm:bmeasureexists}, there is a B-measure on $\bd K$.

We recall that a subset of a topological space is called \emph{perfect}, if it 
is closed and 
has no isolated point.
Recall that the \emph{support} $\supp(\mu)$ of a Borel measure $\mu$ on a 
topological space $X$ is the set of all $x\in X$ such that all open sets 
containing $x$ have positive $\mu$-measure.
In the proof of Theorem~\ref{thm:bmeasureexists}, we 
rely on the following result.
\begin{proposition}\label{thm:continuoumeasure}
Let $H\subset[0,1]$ be a non-empty, closed, perfect set. Then there is a 
continuous probability measure on $[0,1]$ whose support is $H$.
\end{proposition}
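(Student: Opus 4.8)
The plan is to realise $\mu$ as a generalised Cantor--Lebesgue distribution adapted to $H$. For each $n$ the half-open dyadic intervals $D^n_j=[j2^{-n},(j+1)2^{-n})$ partition $[0,1)$; I call $D^n_j$ \emph{active} if $H$ meets its interior in an uncountable set. The one fact I draw from perfectness is that $H$ meets every open interval that it touches in an uncountable set (a non-empty perfect set is uncountable, and this is inherited locally, since a small closed subinterval meets $H$ in a non-empty set whose isolated points number at most two, leaving a non-empty perfect, hence uncountable, core); in particular a dyadic interval is active as soon as its interior meets $H$. I then assign weights recursively: set $w([0,1))=1$ and, for an active interval $I$, split $w(I)$ equally among its active children, of which there is at least one because an uncountable set cannot be the union of two countable dyadic halves, while inactive intervals receive weight $0$. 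A routine induction gives $w(J)>0$ exactly when $J$ is active, and additivity over children is built in.

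First I would upgrade the weights to a Borel measure: the numbers $w(J)$ define a finitely additive, consistent set function on the algebra generated by the half-open dyadic intervals, and Carath\'eodory extension yields a Borel probability measure $\mu$ on $[0,1]$ with $\mu(D^n_j)=w(D^n_j)$. Because a point outside the closed set $H$ has a dyadic neighbourhood disjoint from $H$, every such point lies in inactive, hence null, intervals, so that $\mu([0,1]\setminus H)=0$ and $\supp(\mu)\subseteq H$. For the reverse inclusion, fix $x\in H$ and an open $U\ni x$; since $U\cap H$ is uncountable it contains a non-dyadic point $p$, and for large $n$ the dyadic interval $D^n(p)$ containing $p$ lies in $U$ with $p$ in its interior, so $D^n(p)$ is active and $\mu(U)\ge\mu(D^n(p))=w(D^n(p))>0$. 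Hence $\supp(\mu)=H$.

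The delicate point, and the one I expect to be the main obstacle, is that $\mu$ has no atoms; this is precisely where passing to \emph{uncountable} (rather than merely non-empty) intersections does the real work, by preventing spurious mass from piling up at one-sided or dyadic-endpoint points of $H$. Fix $x\in H$, let $D^0(x)\supseteq D^1(x)\supseteq\cdots$ be the half-open dyadic intervals of successive generations containing $x$, so that $\bigcap_n D^n(x)=\{x\}$, and let $S_n$ be the child of $D^n(x)$ not containing $x$. If some $D^N(x)$ is inactive then $w(D^n(x))=0$ for all $n\ge N$ and $\mu(\{x\})\le w(D^N(x))=0$. Otherwise every $D^n(x)$ is active, and I claim the passage $D^n(x)\to D^{n+1}(x)$ halves the weight infinitely often: if it did so only finitely often, then beyond some level $M$ each $S_n$ would be inactive, i.e.\ meet $H$ in its interior countably, whence $H\cap\mathrm{int}(D^M(x))$ would be contained in $\bigsqcup_{n\ge M}\bigl(H\cap\mathrm{int}(S_n)\bigr)$ together with countably many dyadic points, and so would be countable, contradicting that $D^M(x)$ is active. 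Therefore $w(D^n(x))\to 0$, and since $\mu(\{x\})\le\mu(D^n(x))=w(D^n(x))$ we get $\mu(\{x\})=0$. Thus $\mu$ is a continuous probability measure with $\supp(\mu)=H$, as required.
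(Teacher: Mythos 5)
Your construction is genuinely different from the paper's. The paper builds a monotone continuous function $f$ by recursively averaging values over the complementary intervals of $H$, takes its Lebesgue--Stieltjes measure $\mu_0$, and must add a second summand $\mu_1(A)=\lambda(A\cap H)$ to catch the parts of $H$ of positive Lebesgue measure; you instead build a Cantor--Lebesgue-type distribution by dyadic mass splitting. Your notion of \emph{active} (uncountable, rather than merely non-empty, intersection of $H$ with the interior) is the right one: it handles the measure-zero and positive-measure parts of $H$ in a single construction, with no correction term, and your local-uncountability lemma (removing the at most two isolated endpoints of $H\cap[c,d]$ leaves a non-empty perfect core) and your halving argument for non-atomicity are both correct as stated. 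What the paper's route buys is that countable additivity is free, since the measure is defined as the Lebesgue--Stieltjes measure of an explicitly continuous function; what yours buys is a more uniform and arguably more transparent mechanism, at the price of an extension step.

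That extension step is where you have a genuine gap: \emph{Carath\'eodory extension does not apply to a merely finitely additive set function} --- you need countable additivity on the dyadic algebra (the premeasure property), and finite additivity of a dyadic mass splitting does not imply it. Concretely, the splitting that always gives all mass to the right child is finitely additive, yet $w\bigl([1-2^{-n},1)\bigr)=1$ while $[1-2^{-n},1)\downarrow\emptyset$, so no countably additive measure agrees with it on dyadic intervals: the mass leaks to the excluded endpoint $1$. You must rule out such leakage for your specific $w$. Fortunately, your own halving argument does exactly this: applied to the left chains $C_n=[b-2^{-n},b)$ at a dyadic $b\in(0,1]$ (whose intersection is empty rather than a singleton, but the same sibling-inactivity contradiction goes through, since $(b-2^{-M},b)$ is covered by the sibling interiors and countably many dyadic points), it gives $w(C_n)\to 0$, i.e.\ left-continuity at dyadic points of the distribution function $F(t)=w\bigl([0,t)\bigr)$, which is precisely the consistency condition making the Lebesgue--Stieltjes/Carath\'eodory extension agree with $w$ on all half-open dyadic intervals. (Alternatively, transport the weights to the compact coding space $\{0,1\}^{\mathbb{N}}$, where cylinders are clopen, countable additivity on the cylinder algebra is automatic by compactness, and push the resulting measure forward.) A last cosmetic fix: the point $x=1$ lies in no half-open dyadic interval, so either close the rightmost interval at each level or note that $\mu(\{1\})=0$ follows from the same left-chain computation at $b=1$. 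With these repairs your proof is complete.
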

This is a well-known result holding more generally for any separable complete 
metric 
space \cite[Chapter II, Theorem~8.1]{P2005}, but for the convenience of the 
reader we 
present an explicit construction in Section~\ref{sec:3}.
It is easy to see that the converse of Proposition~\ref{thm:continuoumeasure} 
holds, namely that the support of any continuous measure on 
$[0,1]$ is a perfect set.
It is well known that every non-empty perfect set is uncountable 
\cite[Theorem~2.43]{Ru76} 
and every uncountable closed 
set contains a perfect set \cite[Section~6B]{Ke95}.
(More generally, all Borel sets and analytic sets have this property 
\cite{Ke95}, but we will only need it for $F_\sigma$ sets).

\section{The Auerbach set and B-measure}\label{sec:bmeasureexists}
Given two non-opposite points $a,b\in \bd K$, we denote by $\ark{a}{b}$ the 
closed arc from $a$ to $b$ that does not contain any opposite pairs of points.
We denote the closed line segment with endpoints $a,b\in\Re^2$ by $[a,b]$.

\begin{lemma}\label{lem:suppina}
 Let $K$ be an origin-symmetric convex body in $\Re^2$ and $\mu$ be a B-measure 
on $\bd K$. Then $\supp(\mu)\subseteq A(K)\setminus E(K)$.
\end{lemma}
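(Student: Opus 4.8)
The plan is to show the two containments behind $\supp(\mu)\subseteq A(K)\setminus E(K)$ separately, since they correspond to two genuinely different features of a B-measure. First I would establish that $\supp(\mu)$ avoids the relative interiors of segments, i.e. $\supp(\mu)\cap E(K)=\emptyset$. Take a point $x$ lying in the interior of a maximal line segment $[p,q]\subseteq\bd K$. The direction of Birkhoff orthogonality is locally constant along the open segment: every point in the relative interior of $[p,q]$ is Birkhoff orthogonal to the common direction $q-p$, and the arc $\ark{x}{z}$ for $z$ slightly past $x$ along the segment stays inside the same supporting line. The idea is that for two nearby points $x_1,x_2$ in the relative interior with $x_1\dashv y_1$ and $x_2\dashv y_2$, the B-measure forces $\mu(\ark{x}{y_1})=\mu(\ark{x}{y_2})=\pi/2$, and subtracting shows that a whole sub-arc sitting inside the segment carries $\mu$-measure zero. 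Iterating, the open segment has measure zero, and by continuity of $\mu$ so does its closure; hence no point of $E(K)$ lies in the support.

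Second I would show $\supp(\mu)\subseteq A(K)$, equivalently that the open set $\bd K\setminus A(K)$ is $\mu$-null. The key geometric input is that for a point $x\notin A(K)$, the Birkhoff-orthogonality relation is \emph{strictly one-sided}: if $x\dashv y$ then $y\not\dashv x$, so the orthogonality partner of $x$ differs from the partner one obtains by orthogonality in the reverse direction. I would exploit this asymmetry to manufacture a short arc, contained in a neighbourhood of $x$ and disjoint from $A(K)$, both of whose endpoint-pairs are Birkhoff orthogonal, yet such that the B-measure condition forces its measure to be strictly positive \emph{and} strictly less than it should be, or more directly, such that two overlapping arcs each of measure $\pi/2$ share an overlap forced to have measure zero. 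Concretely, if $x\dashv y$ and I move $x$ to a nearby $x'$ with $x'\dashv y'$, the $\pi/2$ condition on $\ark{x}{y}$ and $\ark{x'}{y'}$ lets me cancel the common part and conclude that the little arc between $x$ and $x'$ is $\mu$-null; doing this along a covering of the open set $\bd K\setminus A(K)$ gives that this set is null.

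The main obstacle I anticipate is the second part, pinning down exactly how the failure of symmetry at a non-Auerbach point translates into a measure-zero conclusion. The difficulty is bookkeeping: Birkhoff orthogonality is not symmetric, so I must track carefully, for each base point, which endpoint plays the role of ``$x$'' and which of ``$y$'' when I invoke $\mu(\ark{\cdot}{\cdot})=\pi/2$, and ensure the arcs I compare genuinely share the sub-arc I want to annihilate rather than merely being close. A clean way to organize this is to define, for each $x$, the forward orthogonality map $x\mapsto \phi(x)$ with $x\dashv\phi(x)$ (choosing the endpoint of the appropriate shorter arc), note that $\mu(\ark{x}{\phi(x)})=\pi/2$ identically, and then differentiate this relation: as $x$ varies over an arc avoiding $A(K)$, comparing $\ark{x}{\phi(x)}$ with $\ark{x'}{\phi(x')}$ shows that $\mu$ restricted to that arc is carried entirely by the set where $x=\phi(\phi(x))$, i.e.\ by $A(K)$. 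Making ``differentiate'' precise without assuming smoothness of $\bd K$ — working instead with monotonicity of the orthogonality map and the additivity and continuity of $\mu$ — is the technical heart, and I would handle it by a monotone-covering argument rather than any calculus.
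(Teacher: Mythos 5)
Your first part (ruling out $E(K)$) is essentially the paper's argument and is fine: all points of a boundary segment are Birkhoff orthogonal to the same $y\in\bd K$ parallel to the segment, so the arcs from the two endpoints of the segment to $y$ both have measure $\pi/2$, and their difference, the segment itself, is null. (Your phrasing with two partners $y_1,y_2$ is a slip: for points of the segment the relevant partner is the common direction of the segment, and it is precisely the sharing of this one endpoint that makes the subtraction give zero.)

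The second part has a genuine gap: the concrete mechanism you describe does not work. If $x\dashv y$ and $x'\dashv y'$ with $x'$ near $x$, then cancelling the common sub-arc of $\ark{x}{y}$ and $\ark{x'}{y'}$ yields $\mu(\ark{x}{x'})=\mu(\ark{y}{y'})$ --- equality of the two swept arcs, \emph{not} $\mu(\ark{x}{x'})=0$. Note that your argument never actually uses $x\notin A(K)$: it applies verbatim at Auerbach points, and in the Euclidean disc with arc length (a B-measure for which every point is an Auerbach point) it would ``prove'' that every short arc is null, which is absurd. To extract nullity from the B-measure property you need two $\pi/2$-arcs that \emph{share an endpoint}, and this is exactly where the asymmetry at a non-Auerbach point must be spent. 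The paper does it as follows: for $x\notin A(K)$ choose $y_1$ with $x\dashv y_1$ and $y_2$ with $y_2\dashv x$; since $x$ is not Auerbach, $y_1\neq y_2$, and after normalizing signs $y_2$ lies strictly between $x$ and $y_1$. Now take second-level partners $x_1$ with $y_1\dashv x_1$ and $x_2$ with $x_2\dashv y_2$. The arcs $\ark{x}{y_1}$ and $\ark{x_1}{y_1}$ share the endpoint $y_1$ and are nested, so subtraction gives $\mu(\ark{x}{x_1})=0$; similarly $\ark{y_2}{x}$ and $\ark{x_2}{y_2}$ share $y_2$ and give $\mu(\ark{x}{x_2})=0$. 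The geometric payoff of $y_2$ lying strictly between $x$ and $y_1$ is that it forces $x_1$ and $x_2$ onto \emph{opposite} sides of $x$, so the two null arcs together cover a neighbourhood of $x$, whence $x\notin\supp(\mu)$. Your fallback sketch via the forward map $\phi$ runs into the same wall: comparing $\ark{x}{\phi(x)}$ with $\ark{x'}{\phi(x')}$ only shows that $\phi$ preserves the $\mu$-measure of arcs, and deducing from this that $\mu$ is carried by the set where $\phi(\phi(x))=-x$ requires a further argument (for instance, iterating the map $x\mapsto-\phi(\phi(x))$ and observing that infinitely many pairwise disjoint arcs of equal measure must each be null), which you do not supply.
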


\begin{proof}
Consider a non-degenerate line segment $[x^-,x^+]\subset\bd K$.
Let $y\in\bd K$ parallel to $[x^-,x^+]$.
Since $x^-,x^+\dashv y$, we have $\mu([x^+,y])=\mu([x^-,y])=\pi/2$. Thus, 
$\mu([x^-,x^+])=0$, and hence, no 
$x$ strictly between $x^-$ and $x^+$ is in $\supp(\mu)$.

Next, let $x$ be a point in $\bd K\setminus A(K)$.
Let $y_1,y_2\in\bd K$ such that $x\dashv y_1$ and $y_2\dashv x$.
Then $y_1\neq y_2$.
By possibly replacing $y_2$ by $-y_2$, we assume without loss of generality 
that $y_1$ and $y_2$ are in the same open half plane bounded by the line $ox$.
By possibly replacing $x$ by $-x$, we may also assume without
loss that $y_2$ and 
$x$ are in the same open half plane bounded by $oy_1$.
Let $x_1$ and $x_2$ be points on the same side of $oy_1$ as $x$ such that 
$y_1\dashv x_1$ and $x_2\dashv y_2$.
Then $x_1,x_2\neq x$.
Because $y_2$ is between $x$ and $y_1$, we have that $x_1$ and $x_2$ are in 
opposite open half planes bounded by $ox$.
Since $\mu$ is a B-measure, $\mu(\ark{x}{x_1})=\mu(\ark{x}{x_2})=0$.
Thus, $\mu(\ark{x_1}{x_2})=0$, hence $x\notin\supp(\mu)$.
\end{proof}

\begin{proof}[Proof of Theorem~\ref{thm:bmeasureexists}]
Let $\mu$ be a B-measure. Since $\supp(\mu)$ is a perfect set, hence 
uncountable, Lemma~\ref{lem:suppina} gives that $A(K)\setminus E(K)$ is 
uncountable.

Conversely, assume that $A(K)\setminus E(K)$ is uncountable.
Define a map $\phi:A(K)\setminus E(K)\to \bd K$ by setting $\phi(x)$ to be the 
first $y\in A(K)$ in the positive direction along $\bd K$ from $x$ such that 
$x\dashv y$ and $y\dashv x$.
Then $\phi$ is monotone, but not necessarily injective.
However, if $\phi(x_1)=\phi(x_2)$, then $x_1\dashv y$ and $x_2\dashv y$, hence 
$[x_1,x_2]$ is a line segment on $\bd K$.
It follows that for any given $y\in \bd K$, there are at most two values of 
$x\in A(K)\setminus E(K)$ such that $\phi(x)=y$, and there are at most 
countably many $y\in\bd K$ for which there is more than one $x\in A(K)\setminus 
E(K)$ such that $\phi(x)=y$.
Thus, $\phi$ is a Borel measurable map.

For any $x\in\bd K$, let $x^+$ denote the first element of $A(K)\setminus E(K)$ 
in the positive direction from $x$, and similarly, let $x^-$ be the first 
element of $A(K)\setminus E(K)$ in the negative direction from $x$.

Fix $a\in A(K)\setminus E(K)$.
Let $b=\phi(a)$ and $a'=\phi(b^+)$.
If $b^+\neq b$ then $a'$ is not strictly between $-a$ and $b$.
If $b^+=b$ and $a'$ is strictly between $-a$ and $b$, then $[-a,a']$ is a line 
segment on $\bd K$.
In either case, we obtain that $\ark{a}{b}\cap A(K)\setminus E(K)$ or 
$\ark{b^+}{a}\cap A(K)\setminus E(K)$ is uncountable.
Thus we may assume without loss of generality that $\ark{a}{b}\cap 
A(K)\setminus E(K)$ is uncountable.
It then contains a perfect set $H$ and there is a continuous probability 
measure $\nu$ on the Borel sets of $\bd K$ with support $H$, of which the 
existence is guaranteed by 
Proposition~\ref{thm:continuoumeasure}.
We use $\nu$ to define the B-measure as follows.
For any Borel set $A\subset \bd K$, let 
\begin{equation}
\mu(A)=\frac{\pi}{2}\big[\nu(A)+\nu(-A)+\nu(\phi^{-1}(A))+\nu(\phi^{-1}(-A))\big
].
\end{equation}
Then $\mu$ is clearly an angular measure.
To see that $\mu$ is a B-measure, let $x, y\in \bd 
K$ with $x\dashv y$.
By possibly replacing $x$ by $-x$ and $y$ by $-y$, we may assume that 
$x\in\ark{a}{b}$ or $x\in\ark{b}{-a}$, and that $y\in\ark{a}{b}\cup\ark{b}{-a}$.

\begin{casesp}
    \item $x\in\ark{a}{b}$.
    Then either $y\in\ark{a}{b}$ or $y\in\ark{b}{-a}\setminus\{b\}$.
    \begin{casesp}
        \item $y\in\ark{a}{b}$. \label{1.1}
        Then there are two cases depending on the relative position of $x$ and 
$y$.
        \begin{casesp}
            \item $x\in\ark{a}{y}$. \label{1.1.1}
                In this case necessarily $y=b$ or $x=a$.
                If $y=b$ then $[a,x]$ is a segment on $\bd K$, hence 
$\mu(\ark{a}{x})=0$ and $\mu(\ark{x}{y})=\pi/2$.
                If $x=a$ and $y\neq b$, then we show that $\ark{y}{b}\cap A(K)$ 
is finite.
                Suppose there exist $y'\in\ark{y}{b}\cap A(K)\setminus\{b,y\}$.
                Then there exist $a'\in\ark{a}{-b}\setminus\{a\}$ such that 
$a'\dashv y'$, $y'\dashv a'$.
                However, since $a\dashv y$ and $a\dashv b$, we obtain $a\dashv 
y'$,
                hence $[a',a]$ is a line segment on $\bd K$ and $y'=y$.
                Therefore, $\mu(\ark{y}{b})=0$.

            \item $x\in\ark{y}{b}$.
                In this case necessarily $x=b$ or $y=a$.
                This case is finished in a similar way as Case~\ref{1.1.1}.
        \end{casesp}
        \item $y\in\ark{b}{-a}\setminus\{b\}$.
        We have to show that $\phi^{-1}(\ark{b}{y})=\ark{a}{x}\cap 
A(K)\setminus E(K)$ up to measure $0$.
        \begin{casesp}
            \item $x\notin A(K)\setminus E(K)$.
                Then $\phi(x^-)\in\ark{b}{y}$ and $\phi(x^+)\in\ark{y}{-a}$.
                Note that $\phi^{-1}(\ark{b}{\phi(x^-)}) = \ark{a}{x}\cap 
A(K)\setminus E(K) \cup\phi^{-1}(\{\phi(x^-),b\})$, where 
$\phi^{-1}(\{\phi(x^-),b\})$ contains at most $4$ points.
                Similarly, $\phi^{-1}(\ark{b}{\phi(x^+)}) = \ark{a}{x}\cap 
A(K)\setminus E(K) \cup\phi^{-1}(\{\phi(x^+),b\})$, and it follows that 
$\phi^{-1}(\ark{b}{y})=\ark{a}{x}\cap A(K)\setminus E(K)$ up to $2$ points.

            \item $x\in A(K)\setminus E(K)$.
                We show that $\ark{\phi(x)}{y}\cap A(K)$ is finite, from which 
follows that $\phi^{-1}(\ark{b}{y})$ equals $\ark{a}{x}$ up to finitely many 
points.

                Suppose that there exists $y'\in\ark{\phi(x)}{y}\cap 
A(K)\setminus\{\phi(x),y\}$.
                \begin{casesp}
                    \item $y\in\ark{b}{\phi(x)}$. \label{1.2.2.1}
                    Then there exists $x'\in\ark{a}{x}$ such that $x'\dashv 
y'$, $y'\dashv x'$.
                    Since $x\dashv\phi(x)$ and $x\dashv y$, we have $x\dashv 
y'$.
                    It follows that $[x,x']$ is a segment on $\bd K$ parallel 
to $y'$, but then $y=y'$ unless $x=x'$, but then $\phi(x)=y'$.
                    \item $\phi(x)\in\ark{b}{y}$.
                    Similar to Case~\ref{1.2.2.1} we obtain that $x=x'$, and 
then $\phi(x)\dashv x$ and $y'\dashv x$, hence $[\phi(x),y']$ is a segment on 
$\bd K$ parallel to $x$.
                    This holds for all $y'\in 
A(K)\cap\ark{\phi(x)}{y}\setminus\{\phi(x),y\}$.
                    It follows that 
$\phi^{-1}(\ark{\phi(x)}{y})\subseteq\phi^{-1}(\{\phi(x),y\})$, which consists 
of at most $4$ points.
                \end{casesp}
        \end{casesp}
    \end{casesp}
\item $x\in\ark{b}{-a}\setminus\{b,-a\}$.
    If $y\in\ark{b}{-a}$, then as in Case~\ref{1.1}, $y=b$ or $y=-a$.
    But then, by possibly replacing $y$ with $-y$, we can assume that 
$y\in\ark{a}{b}$.
    Thus without loss of generality, $y\in\ark{a}{b}$.
    Then $\phi(y^+)\in\ark{x}{-a}$ and $\phi(y^-)\in\ark{b}{x}$.
    Then $\phi^{-1}(\ark{b}{\phi(y)}$, which equals $\ark{a}{y^-}\cap 
A(K)\setminus E(K)$ up to $2$ points, is contained in $\phi^{-1}\ark{b}{x})$, 
which is in turn contained in $\phi^{-1}(\ark{b}{\phi(y^+)})$, which equals 
$\ark{a}{y^+}\cap A(K)\setminus E(K)$ up to $2$ points.
    It follows that $\nu(\phi^{-1}(\ark{b}{x})=\nu(\ark{a}{y})$.
\end{casesp}

This completes the proof of Theorem~\ref{thm:bmeasureexists}.
\end{proof}

\begin{example}\label{ex:auerbachcantor}
We present a smooth, strictly convex, centrally symmetric planar body $K$ such 
that $A(K)$ is the union of two disjoint copies of the Cantor set and a 
countable set of isolated points.

First, let $K_0$ be the Euclidean unit disk centered at the origin, and let 
$C$ denote the shorter arc connecting the two points whose angles with the 
positive $x$ axis are $-\pi/4$ and $\pi/4$. Let $C_0$ denote the Cantor set in 
$C$. Now, $C_0$ can be written as 
\[
C_0=C\setminus\bigcup_{n=1}^{\infty}I_n,
\]
where the $I_n$ are disjoint open arcs in $C$. 

For each $n\in\bZ^+$, we construct a smooth and strictly convex curve $C_n$ 
connecting the two endpoints of $I_n$ with the following properties.
\begin{enumerate}
 \item 
$C_n$ has the same tangents at the endpoints as $K_0$;
\item 
$C_n$ is contained in $\conv I_n$; 
\item 
for any point $x$ of $C_n$ which is neither an end point, nor the midpoint of 
$C_n$, the tangent of $C_n$ at $x$ is not orthogonal to $x$.
\end{enumerate}

Let 
\[
 \Psi(x) = 
\begin{cases}
\exp\left( -\frac{1}{1 - x^2}\right) & \mbox{ for } x\in (-1,1) \\
0 & \mbox{ otherwise} 
\end{cases}
\]
denote a \emph{bump function}. It is well known that $\Psi$ is non-negative, 
smooth, its support is $[-1,1]$, and the only points in its support where 
the derivative is zero are $-1,1$ and $1/2$.

Let $\alpha_0< \alpha_1$ denote the angles for which the two end points of 
$I_n$ have polar coordinates $(\alpha_0,1)$ and $(\alpha_1,1)$. Let $C_n$ be 
the curve whose polar coordinates are given for each 
$\varphi\in[\alpha_0,\alpha_1]$ by 
\begin{equation*}
\bigg(\varphi,1-\varepsilon\Psi\left(\frac{2}{\alpha_1-\alpha_0}\left[
\varphi-\frac{\alpha_0+\alpha_1}{2}\right]\right)\bigg),
\end{equation*}
where $\varepsilon>0$ is small.

Clearly, $C_n$ is a smooth curve, and if $\varepsilon$ is small, then it is 
also strictly convex.
Moreover, $C_n$ has the first property, as 
$\Psi^{\prime}(-1)=\Psi^{\prime}(1)=0$. If $\varepsilon$ is small, 
then $C_n$ has the second property as well. Finally, to verify the third 
property, 
observe that the tangent of $C_n$ is orthogonal to $x$ at a point $x$ of $C_n$ 
if and only if, the derivative of 
\[
1-\varepsilon\Psi\left(\frac{2}{\alpha_1-\alpha_0}\left[
\varphi-\frac{\alpha_0+\alpha_1}{2}\right]\right)
\]
as a function of $\varphi$ is zero at the polar angle $\varphi$ of $x$. 
However, 
that is only the case at the two end points and at the mid point of $C_n$.

Consider the following closed curve.
\[
L=\left(C_0\cup -C_0\right)\cup\bigcup_{n=1}^{\infty}\left(C_n\cup -C_n 
\right).
\]
Clearly, $L$ is the boundary of a smooth, strictly convex, centrally symmetric 
planar body, call it $K$. By construction, for each $n\in\bZ^+$, only the 
mid-point of the relative interior of the arc $C_n$ is an Auerbach point. The 
same holds for $-C_n$. On the other hand, all points of $C_0\cup-C_0$ are 
Auerbach points. Thus, $K$ is as promised.
\end{example}

\section{Proof of Proposition~\ref{thm:continuoumeasure}}\label{sec:3}

We may assume that $0,1\in H$. Enumerate the components of $\Re\setminus H$ as 
$I_0,I_1,\ldots$, where $I_0=(-\infty,0)$ and $I_1=(1,\infty)$. We will assign 
recursively a real number $a_n$ to each open interval $I_n$. Let $a_0=0$ and 
$a_1=1$.

If $a_k$ is defined for $k<n$, then let
\begin{equation*}
 a_n=\frac{1}{2}\left(\max_{\ell<n,\, I_{\ell}<I_n} a_{\ell}+\min_{\ell<n, \,
I_{\ell}>I_n} a_{\ell} \right),
\end{equation*}
that is, we consider the two intervals with indices less than $n$ just below 
and just above $I_n$, and $a_n$ is the average of the two values assigned to 
these two intervals.

We define a function $f$ on $\Re$ as follows. First, on $\Re\setminus H$, let 
$f|_{I_n}=a_n$. To extend $f$ to $\Re$, we set
\begin{equation}
 a(x)=\sup (-\infty,x)\setminus H, \text{ and }  
 b(x)=\inf (x,\infty)\setminus H.
\end{equation}

If $x\in H$ and $a(x)=b(x)$, then clearly, the left limit, $f(a(x)-)$, of $f$ 
at $a(x)$ is equal to the right limit $f(b(x)+)$. Thus, the function
\begin{equation*}
 f(x)=
 \begin{cases}
	a_n, \text{ if } x\in I_n;\\
	f(a(x)-)=f(b(x)+), \text{ if }  x\in H  \text{ and }  x=a(x)=b(b);\\
	f(a(x)-)\frac{b(x)-x}{b(x)-a(x)}+ f(b(x)+)\frac{x-a(x)}{b(x)-a(x)}, 
\text{ if }  
x\in H \text{ and } a(x)<b(x)
\end{cases}
\end{equation*}
is continuous, strictly increasing, and is locally constant on $\Re\setminus H$.

Finally, let $\mu_0$ denote the Lebesgue-Stieltjes measure corresponding to 
$f$, and $\mu_1$ the measure $\mu_1(A)=\lambda(A\cap H)$, where $\lambda$ is the 
Lebesgue measure. Clearly, $\mu=\mu_0+\mu_1$ is a continuous measure, and 
$\supp \mu \subseteq H$.

To show the reverse containment, let $I$ be an open interval and assume that 
$I\cap H\neq\emptyset$. If $I\cap H$ is of positive Lebesgue measure, then 
$\mu(I)>\mu_1(I)>0$. Otherwise, $I$ is intersected by at least two $I_k$. 
Indeed, if only one $I_k$ would intersect $I$, then $I\cap H$ would 
be the union of at most two intervals contradicting that $H$ is perfect and 
of Lebesgue measure zero.

Since the values of $f$ on distinct intervals $I_k$ are distinct, $f$ is not 
constant on $I$, and hence, $\mu(I)>\mu_0(I)>0$, completing the proof of 
Proposition~\ref{thm:continuoumeasure}.

\section*{Acknowledgement}
KS thanks Adam Ostaszewski for enlightening conversations and for drawing his attention to \cite{P2005}.

\bibliographystyle{amsalpha}
\bibliography{biblio}
\end{document}